\newtheorem{theorem}{Theorem}[section]
\newtheorem{definition}[theorem]{Definition}
\newtheorem{lemma}[theorem]{Lemma}
\newcommand{\pd}[2]{\frac{\partial #1}{\partial #2}}
\title{Convergence of the Crank-Nicolson-Galerkin finite element method for a class of nonlocal parabolic systems with
moving boundaries}
\author{Rui M.P. Almeida \thanks{Department of Mathematics, Faculty of Science,
University of Beira Interior, http://www.cmatubi.ubi.pt, email: ralmeida@ubi.pt}
\and Jos\'{e} C.M. Duque \thanks{Department of Mathematics, Faculty of Science,
University of Beira Interior, http://www.cmatubi.ubi.pt, email: jduque@ubi.pt}
\and Jorge Ferreira \thanks{Federal University Rural of Pernambuco-UFRPE-UAG, Center for Mathematics and Fundamental Applications, Faculty of Science, University of Lisbon,http://cmaf.ptmat.fc.ul.pt, email:ferreirajorge2012@gmail.com }
\and Rui J. Robalo \thanks{Department of Mathematics, Faculty of Science,
University of Beira Interior, http://www.cmatubi.ubi.pt, email: rrobalo@ubi.pt}}
\date{\today}
\begin{document}
\maketitle

\begin{abstract}
The aim of this paper is to establish the convergence and error bounds to the fully discrete solution for a
class of nonlinear systems of reaction-diffusion nonlocal type with moving boundaries, using a linearized
Crank-Nicolson-Galekin finite element method with polynomial approximations of any degree.

A coordinate transformation which fixes the boundaries is used.
Some numerical tests to compare our Matlab code
with some existence moving finite elements methods are investigated.

\textbf{keywords}: nonlinear parabolic system, nonlocal diffusion term, reaction-diffusion, convergence, numerical
 simulation, Crank-Nicolson, finite element method.

\end{abstract}

\section{Introduction}
\label{intro}

In this work, we study parabolic systems with nonlocal nonlinearity of the
following type
\begin{equation}  \label{probi}
\left\{
\begin{array}{l}
\displaystyle \frac{\partial u_i}{\partial t}-a_i\left( \int_{\Omega
_{t}}u_1(x,t)dx,\dots,\int_{\Omega _{t}}u_{ne}(x,t)dx\right) \frac{%
\partial^2u_i}{\partial x^2}=f_i\left(x,t\right)\,, \quad (x,t)\in Q_{t} \\
\displaystyle u_i\left( \alpha (t),t\right) =u_i\left( \beta
(t),t\right)=0\,,\quad t>0 \\
\displaystyle u_i(x,0)=u_{i0}(x)\,, \quad x\in \Omega _{0}=]\alpha (0),\beta
(0)[,\quad i=1,\dots,ne \\
\end{array}
\right. \,
\end{equation}
where $Q_{t}$ is a bounded non-cylindrical domain defined by
\begin{equation*}
Q_{t}=\left\{ (x,t)\in \mathbbm{R}^{2}:\:\alpha(t)<x<\beta(t),\:\: \text{for
all } 0<t<T\right\}\,.
\end{equation*}

Problem (\ref{probi}) is nonlocal in the sense that the diffusion
coefficient is determined by a global quantity, that is, $a$ depends on the
whole population in the area and it arises in a large class of real models.
For example, in biology, where the solution $u$ could describe the density
of a population subject to spreading; or in physics, where $u$ could represent
the temperature, considering that the measurements are an average in the
neighbourhood \cite{chi00}.

This class of problems with nonlocal coefficient in an open bounded
cylindrical domain was initially studied by Chipot and Lovat  in \cite{CL97}
, where they proved the existence and uniqueness of weak solutions. In
recent years nonlinear parabolic equations with nonlocal diffusion terms
have been extensively studied \cite
{CL99,AK00,CM01,CC03a,CS03,CVC03,CMF04,ZC05}, especially in relation to
questions of existence, uniqueness and asymptotic behaviour.

If we want to model interactions then we need to use a system. Raposo et al.
\cite{RSVPS08}, in 2008, studied the existence, uniqueness and exponential
decay of solutions for reaction-diffusion coupled systems of the form
\begin{equation*}
\left\{
\begin{array}{lrl}
u_{t}-a(l(u))\Delta u+f(u-v)=\alpha (u-v) & \text{ in } & \Omega \times
]0,T], \\
v_{t}-a(l(v))\Delta v-f(u-v)=\alpha (v-u) & \text{ in } & \Omega \times
]0,T], \\
&  &
\end{array}%
\right.
\end{equation*}%
with $a(\cdot )>0$, $l$ a continuous linear form, $f$ a Lipschitz-continuous
function and $\alpha $ a positive parameter. Recently, Duque et al. \cite%
{DAAFppa} considered nonlinear systems of parabolic equations with a more
general nonlocal diffusion term working on two linear forms $l_{1}$ and $%
l_{2}$:
\begin{equation}
\left\{
\begin{array}{lrl}
u_{t}-a_{1}(l_{1}(u),l_{2}(v))\Delta u+\lambda _{1}|u|^{p-2}u=f_{1}(x,t) &
\text{ in } & \Omega \times ]0,T], \\
v_{t}-a_{2}(l_{1}(u),l_{2}(v))\Delta v+\lambda _{2}|v|^{p-2}v=f_{2}(x,t) &
\text{ in } & \Omega \times ]0,T]. \\
&  &
\end{array}%
\right.   \label{sisDAAF}
\end{equation}%
They gave important results on polynomial and exponential decay, vanishing
of the solutions in finite time and localization properties such as waiting
time effect.

Moving boundary problems occur in many physical applications involving
diffusion, such as in heat transfer where a phase transition occurs, in
moisture transport such as swelling grains or polymers, and in deformable
porous media problems where the solid displacement is governed by diffusion,
(see for example, \cite{fer97,fer99,fer05,bri07}). Cavalcanti et al \cite%
{fer03}  worked with a time-dependent function $a=a\left( t,\int_{\Omega
_{t}}\left\vert \nabla u(x,t)\right\vert ^{2}dx\right) $ to establish the
solvability and exponential energy decay of the solution for a model given
by a hyperbolic-parabolic equation in an open bounded subset of $\mathbbm{R}%
^{n}$, with moving boundary. Santos et al. \cite{san05} established the
exponential energy decay of the solutions for nonlinear coupled systems for
beam equations with memory in noncylindrical domains. Recently, Robalo et
al. \cite{RACFppa} proved the existence and uniqueness of weak and strong
global in time solutions and gave conditions, on the data, for these
solutions to have the exponential decay property. The analysis and numerical
simulation of such problems presents other challenges. In \cite{AK00},
Ackleh and Ke propose a finite difference scheme to approximate the
solutions and to study their long time behavior. The authors also made
numerical simulations,  using an implicit finite difference scheme in one
dimension \cite{RSVPS08} and the finite volume discretization  in two space
dimensions \cite{EGHM02}. Bendahmane and Sepulveda \cite{BS09} in 2009
investigated the propagation of an epidemic disease modeled by a system of
three PDE, where the $i$th equation is of the type
\begin{equation*}
(u_{i})_{t}-a_{i}\left( \int_{\Omega }u_{i}dx\right) \Delta
u_{i}=f_{i}\left( u_{1},u_{2},u_{3}\right) ,
\end{equation*}%
in a physical domain $\Omega \subset \mathbbm{R}^{n}$, $(n=1,2,3)$.
They established the existence of solutions to finite volume scheme and its
convergence to the weak solution of the PDE. In \cite{DAAFppb} the authors
proved the optimal order of convergence for a linearized Euler-Galerkin
finite element method to  problem (\ref{sisDAAF}) and presented some
numerical results. Almeida et al., in \cite{ADFRpp}, established the
convergence and error bounds of the fully discrete solutions for a class of
nonlinear equations of reaction-diffusion nonlocal type with moving
boundaries, using a linearized Crank-Nicolson-Galerkin finite element method
with polynomial approximations of any degree. In \cite{RACFppb}, Robalo et
al. proved the existence and uniqueness of a strong regular solution for a
certain class of a nonlinear coupled system of reaction-diffusion equations
on a bounded domain with moving boundary. The exponential decay of the
energy of the solutions, under the same assumptions, was also proved. In
addition, they obtained approximate numerical solutions for systems of this
type with a Matlab code based on the Moving Finite Element Method (MFEM)
with high degree local approximations.

This paper is concerned with the proof of the convergence of a total
discrete solution using the Crank-Nicolson-Galekin finite element method. To the best of our knowledge, these results are new for nonlocal
reaction-diffusion systems with moving boundaries.

The paper is organized as follows. In Section 2, we formulate the problem and the hypotheses on the
data. In Section 3, we define and prove the convergence of the semidiscrete
solution. Section 4 is devoted to the proof of the convergence to a fully
discrete solution. In Section 5, we obtain approximate numerical solutions
for some examples. To finalize this study, in Section 6, we draw some
conclusions.

\section{Statement of the problem}

In what follows, we study the convergence of a linearized
Crank-Nicolson-Galerkin finite element method to the solutions of the
one-dimensional Dirichlet problem with two moving boundaries, defined by
\begin{equation}  \label{prob0}
\left\{
\begin{array}{l}
\displaystyle \frac{\partial u_i}{\partial t}-a_i\left( \int_{\Omega
_{t}}u_1(x,t)dx,\dots,\int_{\Omega _{t}}u_{ne}(x,t)dx\right) \frac{%
\partial^2u_i}{\partial x^2}=f_i\left(x,t\right), \,(x,t)\in Q_{t} \\
\displaystyle u_i\left( \alpha (t),t\right) =u_i\left( \beta
(t),t\right)=0\,,\quad t>0 \\
\displaystyle u_i(x,0)=u_{i0}(x)\,, \quad x\in \Omega _{0}=]\alpha (0),\beta
(0)[,\quad i=1,\dots,ne \\
\end{array}
\right. \,
\end{equation}

where
\begin{equation*}
Q_{t}=\left\{ (x,t)\in\mathbbm{R}^{2}:\:\alpha(t)<x<\beta(t),\:\: \text{for
all } 0<t<T\right\}\,
\end{equation*}
is a bounded non-cylindrical domain, $T$ is an arbitrary positive real
number and $a_i$ denotes a positive real function. The lateral boundary of $%
Q_ {t} $ is given by $\Sigma_{t}=\bigcup_{0\leq t<T} \left(
\left\{\alpha(t),\beta(t)\right\}\times \{t\}\right)$. Moreover, we assume
that $\alpha^{\prime }(t)< 0$ and $\beta^{\prime }(t)> 0$, for all $t\in
[0,T]$. Note that the hypotheses $\alpha^{\prime }(t)< 0$ and $\beta^{\prime
}(t)> 0$ imply that $Q_{t}$ is increasing, in the sense that if $t_{2}>t_{1}$%
, then the projection of $\left[\alpha(t_{1}),\beta(t_{1})\right]$ onto the
subspace $t=0$ is contained in the projection of $\left[\alpha(t_{2}),%
\beta(t_{2})\right]$ onto the same subspace. This also means that the real
function $\gamma(t)=\beta(t)-\alpha(t)$ is increasing on $0\leq t<T$.

In \cite{RACFppb} Robalo et al. established the existence, uniqueness and
asymptotic behaviour of strong regular solutions for these problems using a
coordinate transformation which fixes the boundaries. They used the fact
that, when $(x,t)$ varies in $Q_{t}$, the point $(y,t)$ of $\mathbbm{R}^{2}$%
, with $y=(x-\alpha(t))/\gamma(t)$, varies in the cylinder $Q=]0,1[
\times]0,T[$. Thus, the function $\tau :Q_{t} \longrightarrow Q$ given by $%
\tau(x,t)=(y,t)$, is of class $\mathcal{C}^{2}$. The inverse $\tau^{-1}$ is
also of class $\mathcal{C}^{2}$. The change of variable $v(y,t)=u(x,t)$ and $%
g(y,t)=f(x,t)$ with $x=\alpha(t)+\gamma(t)\,y$ transforms problem (\ref%
{prob0}) into problem (\ref{prob1}), given by
\begin{equation}  \label{prob1}
\left\{
\begin{array}{l}
\displaystyle \frac{\partial v_i}{\partial t}-a_i\left(
l(v_1),\dots,l(v_{ne})\right)b_2(t) \frac{\partial^2v_i}{\partial y^2}%
-b_1(y,t)\frac{\partial v_i}{\partial y}=g_i\left(y,t\right)\,, \quad
(y,t)\in Q \\
\displaystyle v_i\left(0,t\right) =v_i\left(1,t\right)=0\,,\quad t>0 \\
\displaystyle v_i(y,0)=v_{i0}(y)\,, \quad y\in \Omega=]0,1[,\quad
i=1,\dots,ne \\
\end{array}
\right. \,
\end{equation}
where $l(v)=\gamma(t)\int_0^1 v(y,t)\ dy$, $g_i(y,t)=f_i(\alpha+\gamma\,y,t)$
and $v_{i0}(y)=u_{i0}(\alpha(0)+\gamma(0)\,y)$. The coefficients $b_{1}(y,t)$
and $b_{2}(t)$ are defined by
\begin{equation*}
b_{1}(y,t)=\frac{\alpha^{\prime }(t)+\gamma^{\prime }(t)y}{\gamma(t)}\quad
\text{and} \quad b_{2}(t)=\frac{1}{\left(\gamma(t)\right)^{2}} \,.
\end{equation*}

Since we need the existence and uniqueness of a strong solution in $Q_{t}$,
we consider the same hypotheses as in \cite{RACFppb}, namely:
\begin{equation*}
\begin{array}{l}
(H1)\qquad \alpha ,\,\beta \in \mathcal{C}^{2}\left( [0,T]\right) \text{ and
 }0<\gamma _{0}<\gamma (t)<\gamma _{1}<\infty \,,\;\text{for all }t\in
\lbrack 0,T] \\
(H2)\qquad \alpha ^{\prime },\,\beta ^{\prime }\in L_{1}\left( ]0,T[\right)
\cap L_{2}\left( ]0,T[\right)  \\
(H3)\qquad u_{i0}\in H_{0}^{1}\left( \Omega _{0}\right) \,,\quad \Omega
_{0}=]\alpha (0),\beta (0)[,\quad i=1,\dots ,ne, \\
(H4)\qquad f_{i}\in L_{2}\left( 0,T;L_{2}\left( \Omega _{t}\right) \right)
\cap L_{1}\left( 0,T;L_{2}\left( \Omega _{t}\right) \right) \,, \\
\hspace{1.9cm}\Omega _{t}=]\alpha (t),\beta (t)[,\,i=1,\dots ,ne, \\
(H5)\qquad a_{i}:\mathbbm{R}^{ne}\longrightarrow \mathbbm{R}^{+}\text{
is Lipschitz-continuous } \\
\hspace{1.9cm}\text{with }0<m_{a}\leq a_{i}(s)\leq M_{a}\,,\;\;\text{for all
}s\in \mathbbm{R},\quad i=1,\dots ,ne. \\
\end{array}%
\end{equation*}%
Let $\Omega =]0,1[$. The definition of a weak solution is as follows.

\begin{definition}[Weak solution]
\label{fraca} We say that the function $\mathbf{v}=(v_{1},\dots ,v_{ne})$ is
a weak solution of problem (\ref{prob1}) if, for each $i\in \{1,\dots ,ne\}$,
\begin{equation}
v_{i}\in L_{\infty }(0,T;H_{0}^{1}(\Omega )\cap H^{2}(\Omega )),\frac{%
\partial v_{i}}{\partial t}\in L_{2}(0,T;L_{2}(\Omega )),  \label{regx}
\end{equation}
the following equality in $D^{\prime }(0,T)$ is valid for all $w_{i}\in H_{0}^{1}(\Omega ),$ and $t\in ]0,T[$,
\begin{equation}
\int_{0}^{1}\frac{\partial v_{i}}{\partial t}w_{i}dy+a_{i}(l(v_{1}),\dots
,l(v_{ne}))b_{2}\int_{0}^{1}\frac{\partial v_{i}}{\partial y}\frac{\partial
w_{i}}{\partial y}dy-\int_{0}^{1}b_1\frac{\partial v_{i}}{\partial y}w_{i}dy=\int_{0}^{1}g_{i}w_{i}dy  \label{fracav}
\end{equation}
 and
\begin{equation}
v_{i}(x,0)=v_{i0}(x),\quad x\in \Omega   \label{condiuv}
\end{equation}
\end{definition}

Henceforth, we assume that $\mathbf{v}$ has the regularity needed to perform
all the calculations which follow.

\section{Semidiscrete solution}

We denote the usual $L_{2}$ norm in $\Omega $ by $\Vert .\Vert $ and the
norm in $H^{k}(\Omega )$ by $\Vert .\Vert _{H^{k}}$.\newline
Let $\mathcal{T}_{h}$ denote a partition of $\Omega $ into disjoint
intervals $T_{i}$, $i=1,\dots ,nt$ such that $h=\max \{diam(T_{i}),i=1,\dots
,nt\}$. Now let $S_{h}^{k}$ denote the continuous functions on the closure $%
\bar{\Omega}$ of $\Omega $ which are polynomials of degree $k$ in each
interval of $\mathcal{T}_{h}$ and which vanish on $\partial \Omega $, that
is,
\begin{equation*}
S_{h}^{k}=\{W\in C_{0}^{0}(\bar{\Omega})|{W}_{|{T_{i}}}\text{ is a
polynomial of degree $k$ for all }T_{i}\in \mathcal{T}_{h}\}.
\end{equation*}%
If $\{\varphi _{j}\}_{j=1}^{np}$ is a basis for $S_{h}^{k}$, then we can
represent each $W\in S_{h}^{k}$ as
\begin{equation*}
W=\sum_{j=1}^{np}w_{j}\varphi _{j}.
\end{equation*}%
Given a smooth function $u$ on $\Omega ,$ which vanishes on $\partial \Omega
$, we may define its interpolant, denoted by $I_{h}u$, as the function of $%
S_{h}^{k}$ which coincides with $u$ at the points $\{P_{j}\}_{j=1}^{np}$,
that is,
\begin{equation*}
I_{h}u=\sum_{j=1}^{np}u(P_{j})\varphi _{j}.
\end{equation*}

\begin{lemma}[\protect\cite{Tho06}]
\label{errint} If $u\in H^{k+1}(\Omega)\cap H_0^1(\Omega)$, then
\begin{equation*}
\|I_h u-u\|+h\|\nabla(I_h u-u)\|\leq Ch^{k+1}\|u\|_{H^{k+1}}.
\end{equation*}
\end{lemma}

\begin{definition}[\protect\cite{Tho06} Ritz projection]
A function $\tilde u\in S_h^k$ is said to be the Ritz projection of $u\in
H_0^1(\Omega)$ onto $S_h^k$ if it satisfies
\begin{equation*}
\int_{\Omega}\nabla \tilde u\cdot\nabla W\ dy=\int_{\Omega}\nabla
u\cdot\nabla W\ dy,\quad \text{ for all } W\in S_h^k.
\end{equation*}
\end{definition}

\begin{lemma}[\protect\cite{Tho06}]
\label{errproj} If $u\in H^{k+1}(\Omega )\cap H_{0}^{1}(\Omega )$, then
\begin{equation*}
\Vert \tilde{u}-u\Vert +h\Vert \nabla (\tilde{u}-u)\Vert \leq Ch^{k+1}\Vert
u\Vert _{H^{k+1}},
\end{equation*}%
where $C$ does not depend on $h$ or $k$.
\end{lemma}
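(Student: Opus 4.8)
The statement is the standard Ritz-projection error bound, and the plan is to prove it in two stages: first the gradient ($H^1$) estimate via Galerkin orthogonality and best approximation, then the $L_2$ estimate via an Aubin--Nitsche duality argument. Throughout, write $e=\tilde{u}-u$ and note that $e$ vanishes on $\partial\Omega$ (since both $\tilde u$ and $u$ do) and that the defining property of the Ritz projection is precisely the orthogonality $\int_\Omega \nabla e\cdot\nabla W\,dy=0$ for every $W\in S_h^k$.

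For the gradient estimate I would exploit this orthogonality together with the best-approximation property it induces. For any $W\in S_h^k$ we have $\tilde u-W\in S_h^k$, so $\int_\Omega \nabla e\cdot\nabla(\tilde u-W)\,dy=0$; writing $e=(\tilde u-W)+(W-u)$ and pairing with $\nabla e$ kills the first term, leaving
\[
\|\nabla e\|^2=\int_\Omega \nabla e\cdot\nabla(W-u)\,dy\le \|\nabla e\|\,\|\nabla(W-u)\|.
\]
Hence $\|\nabla e\|\le\|\nabla(W-u)\|$ for every $W\in S_h^k$, i.e.\ $\tilde u$ is the $H^1$-seminorm best approximation to $u$ in $S_h^k$. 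Choosing $W=I_h u$ and invoking the interpolation bound of Lemma~\ref{errint} gives $\|\nabla e\|\le\|\nabla(I_h u-u)\|\le Ch^{k}\|u\|_{H^{k+1}}$, which is the desired estimate for the term $h\|\nabla e\|$.

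For the $L_2$ estimate I would run the duality argument. Let $\phi\in H_0^1(\Omega)$ solve the auxiliary problem $-\phi''=e$ in $\Omega$ with $\phi=0$ on $\partial\Omega$; since $\Omega=]0,1[$ is a one-dimensional interval, elliptic regularity is elementary and yields $\phi\in H^2(\Omega)$ with $\|\phi\|_{H^2}\le C\|e\|$. Testing the equation against $e$ and integrating by parts (boundary terms vanish because $e\in H_0^1$) gives $\|e\|^2=\int_\Omega \nabla e\cdot\nabla\phi\,dy$. Now subtract an arbitrary $W\in S_h^k$ using Galerkin orthogonality and take $W=I_h\phi$:
\[
\|e\|^2=\int_\Omega \nabla e\cdot\nabla(\phi-I_h\phi)\,dy\le \|\nabla e\|\,\|\nabla(\phi-I_h\phi)\|\le \|\nabla e\|\,Ch\,\|\phi\|_{H^2},
\]
where the last step applies Lemma~\ref{errint} with $k=1$ to $\phi\in H^2$. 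Combining with $\|\phi\|_{H^2}\le C\|e\|$ and cancelling one factor of $\|e\|$ yields $\|e\|\le Ch\|\nabla e\|$, and substituting the gradient estimate from the first stage gives $\|e\|\le Ch^{k+1}\|u\|_{H^{k+1}}$. Adding the two bounds completes the proof.

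The routine parts are the two applications of the interpolation lemma and the integrations by parts. The genuine ingredient --- and the step I would treat most carefully --- is the elliptic regularity of the dual problem underpinning the Aubin--Nitsche argument, which is what upgrades the gradient estimate to one full power of $h$ in the $L_2$ norm; in the present one-dimensional setting this is harmless, but it is the conceptual crux. A more delicate point, if one insists on the stated $k$-independence of $C$, is that the interpolation constants in Lemma~\ref{errint} must themselves be taken independent of $k$; I would defer this to the cited reference \cite{Tho06} rather than re-derive the polynomial approximation theory here.
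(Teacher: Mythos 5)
The paper gives no proof of this lemma at all --- it is imported verbatim from \cite{Tho06} --- and your argument (Galerkin orthogonality yielding best approximation in the $H^1$-seminorm, followed by an Aubin--Nitsche duality step with the one-dimensional dual problem $-\phi''=e$) is precisely the standard proof found in that reference, and it is correct. The one step worth flagging, your invocation of Lemma~\ref{errint} ``with $k=1$'' for $\phi\in H^2$, is legitimate provided you read it as interpolating $\phi$ by piecewise linears, since $S_h^1\subseteq S_h^k$ on the same partition and Galerkin orthogonality holds against any element of $S_h^k$.
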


The semidiscrete problem, based on Definition \ref{fraca}, consists in
finding\linebreak $\mathbf{V}=(V_1,\dots,V_{ne})$ belonging to $(S_h^k)^{ne}$, for $
t\geq 0$, such that for all\linebreak $\mathbf{W}=(W_1,\dots,W_{ne})\in (S_h^k)^{ne}$
and $t\in]0,T[$:
\begin{equation}  \label{probsd}
\left\{%
\begin{array}{l}
\displaystyle\int_{0}^{1}\frac{\partial V_{i}}{\partial t}%
W_idy+a(l(V_1),\dots,l(V_{ne}))b_{2}\int_{0}^{1}\frac{\partial V_i}{
\partial y}\frac{\partial W_i}{\partial y}\ dy-\int_{0}^{1}b_{1}\frac{
\partial V_i}{\partial y}W_i\ dy \\[0.7cm]
\displaystyle\hfill\hspace{7cm}=\int_{0}^{1}g_iW_i\ dy \\
V_i(y,0)=I_hv_{i0},\quad i=1,\dots,ne \\
\end{array}%
\right..
\end{equation}

\begin{theorem}
\label{conv_h} If $\mathbf{v}$ is the solution of problem (\ref{prob1}) and $%
\mathbf{V}$ is the solution of (\ref{probsd}), then
\begin{equation*}
\Vert V_{i}-v_{i}\Vert \leq Ch^{k+1},\quad t\in ]0,T],\quad i=1,\dots ,ne
\end{equation*}%
where $C$ does not depend on $h$, $k$ or $i$.
\end{theorem}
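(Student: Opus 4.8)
The plan is to follow the classical Ritz-projection splitting of the error. For each component I would introduce the Ritz projection $\tilde v_i\in S_h^k$ of $v_i$ and write
\[
V_i-v_i=(V_i-\tilde v_i)+(\tilde v_i-v_i)=:\theta_i+\rho_i .
\]
The term $\rho_i$ is handled immediately by Lemma \ref{errproj}, which gives $\|\rho_i\|\le Ch^{k+1}\|v_i\|_{H^{k+1}}$; since the same lemma applies to $\partial v_i/\partial t$ (the Ritz projection commutes with differentiation in $t$), I also obtain $\|\partial\rho_i/\partial t\|\le Ch^{k+1}$. It therefore remains to bound the discrete part $\theta_i\in S_h^k$.

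To do so I would subtract the weak formulation \eqref{fracav}, tested against $W_i\in S_h^k\subset H_0^1(\Omega)$, from the semidiscrete equation \eqref{probsd}, obtaining an error equation for $V_i-v_i$. After inserting the splitting, using the defining property of the Ritz projection to kill the term $\int_0^1\partial_y\rho_i\,\partial_y W_i\,dy$, and choosing $W_i=\theta_i$, the principal diffusion term would be decomposed as
\[
a_i(l(\mathbf V))\int_0^1\partial_y\theta_i\,\partial_y W_i\,dy+\big(a_i(l(\mathbf V))-a_i(l(\mathbf v))\big)\int_0^1\partial_y v_i\,\partial_y W_i\,dy ,
\]
so that the first piece produces the coercive quantity $b_2\,a_i(l(\mathbf V))\,\|\partial_y\theta_i\|^2\ge m_a b_2\|\partial_y\theta_i\|^2$, bounded below uniformly because $b_2=\gamma^{-2}\ge\gamma_1^{-2}$ by (H1) and $a_i\ge m_a$ by (H5); the left-hand side also contributes $\tfrac12\frac{d}{dt}\|\theta_i\|^2$.

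The central difficulty, and the place where the nonlocal coupled structure enters, is the second piece involving $a_i(l(\mathbf V))-a_i(l(\mathbf v))$. Here I would use the Lipschitz continuity of $a_i$ from (H5) together with the explicit form $l(w)=\gamma(t)\int_0^1 w\,dy$ to estimate
\[
|a_i(l(\mathbf V))-a_i(l(\mathbf v))|\le C\sum_{j=1}^{ne}\|V_j-v_j\|\le C\sum_{j=1}^{ne}\big(\|\theta_j\|+\|\rho_j\|\big),
\]
which exhibits that the error in component $i$ is driven by the errors in \emph{all} components. Since $v_i$ is smooth, $\|\partial_y v_i\|$ is bounded, and a Young inequality lets me absorb the resulting factor of $\|\partial_y\theta_i\|$ into the coercive term at the cost of a contribution $C_\varepsilon\big(\sum_j\|\theta_j\|+\sum_j\|\rho_j\|\big)^2$. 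The convection terms $\int_0^1 b_1\,\partial_y(\theta_i+\rho_i)\,\theta_i\,dy$ are treated similarly: $b_1$ is bounded on $[0,T]$ by (H1)--(H2), and Young's inequality again absorbs the $\|\partial_y\theta_i\|$ contributions into the coercivity while leaving lower-order terms in $\|\theta_i\|$ and $\|\rho_i\|$; the term $\int_0^1\partial_t\rho_i\,\theta_i\,dy$ is bounded by Cauchy--Schwarz using the projection estimate for $\partial_t\rho_i$.

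Summing the resulting inequalities over $i=1,\dots,ne$, the coercive gradient terms dominate all absorbed contributions, and every remaining right-hand side term is bounded by $C\sum_i\|\theta_i\|^2+Ch^{2(k+1)}$ (using $\sum_i\|\rho_i\|^2\le Ch^{2(k+1)}$ from Lemma \ref{errproj}), yielding
\[
\frac{d}{dt}\sum_{i=1}^{ne}\|\theta_i\|^2\le C\sum_{i=1}^{ne}\|\theta_i\|^2+Ch^{2(k+1)} .
\]
For the initial data, since $V_i(\cdot,0)=I_h v_{i0}$, the triangle inequality with Lemmas \ref{errint} and \ref{errproj} gives $\|\theta_i(0)\|\le\|I_h v_{i0}-v_{i0}\|+\|v_{i0}-\tilde v_{i0}\|\le Ch^{k+1}$. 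Gronwall's inequality then produces $\sum_i\|\theta_i\|^2\le Ch^{2(k+1)}$ uniformly on $[0,T]$, and the triangle inequality $\|V_i-v_i\|\le\|\theta_i\|+\|\rho_i\|$ closes the proof with a constant independent of $h$, $k$ and $i$. I expect the handling of the nonlocal coefficient difference, and in particular the verification that the coupling through all components can be closed by a single Gronwall argument on the summed quantity $\sum_i\|\theta_i\|^2$, to be the main obstacle.
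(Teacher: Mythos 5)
Your proposal follows the paper's proof almost verbatim: the same Ritz splitting $e_i=\theta_i+\rho_i$, the same use of Lemma \ref{errproj} (including for $\partial\rho_i/\partial t$, which is legitimate because the Ritz form $\int\nabla u\cdot\nabla W\,dy$ is time-independent), the same Lipschitz treatment of $a_i(l(\mathbf{V}))-a_i(l(\mathbf{v}))$ via $|l(V_j)-l(v_j)|\le\gamma_1\|V_j-v_j\|$, the same absorption of the resulting gradient factor into the coercivity $m_a/\gamma_1^2\,\|\partial_y\theta_i\|^2$, and the same Gronwall argument on the summed quantity $\sum_i\|\theta_i\|^2$; the coupling through all components closes exactly as you anticipate.

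There is, however, one step that fails as literally written: the convection term carrying $\rho_i$. You propose to bound $\int_0^1 b_1\,\partial_y\rho_i\,\theta_i\,dy$ by Young's inequality, ``leaving lower-order terms in $\|\theta_i\|$ and $\|\rho_i\|$,'' but a direct Cauchy--Schwarz/Young estimate on this term produces $\|\partial_y\rho_i\|$, not $\|\rho_i\|$, and Lemma \ref{errproj} only gives $\|\partial_y\rho_i\|\le Ch^{k}\|v_i\|_{H^{k+1}}$ --- one power of $h$ short, so Gronwall would only deliver $\|\theta_i\|\le Ch^{k}$. Note also that this term contains no $\|\partial_y\theta_i\|$ factor to absorb into the coercivity, so the mechanism you describe cannot apply to it as it stands. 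The paper's fix is to integrate by parts: since $\theta_i\in H_0^1(\Omega)$ and $\partial b_1/\partial y=\gamma'/\gamma$, one has $\int_0^1 b_1\,\partial_y\rho_i\,\theta_i\,dy=-\frac{\gamma'}{\gamma}\int_0^1\rho_i\,\theta_i\,dy-\int_0^1 b_1\,\rho_i\,\partial_y\theta_i\,dy$, after which the derivative sits on $\theta_i$, Young's inequality absorbs $\|\partial_y\theta_i\|^2$ into the coercive term, and only $\|\rho_i\|^2+\|\theta_i\|^2$ remain, of optimal order. (The paper also integrates by parts the companion term $\int_0^1 b_1\,\partial_y\theta_i\,\theta_i\,dy$, turning it into the nonnegative quantity $\frac{\gamma'}{2\gamma}\|\theta_i\|^2$ which is then dropped; your direct Young absorption of that particular term is equally valid since $b_1$ is bounded, so the discrepancy is harmless there.) With this single repair your argument coincides with the paper's proof and yields the stated $O(h^{k+1})$ bound.
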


\begin{proof}
Let $e_i=V_i-v_i$ be written as
\begin{equation*}
e_i(y,t)=(V_i(y,t)-\tilde{V}_i(y,t))+(\tilde{V}_i
(y,t)-v_i(y,t))=\theta_i (y,t)+\rho_i (y,t),
\end{equation*}
with  $\tilde{V}_i^{(h)}(y,t)\in S_{h}^{k}$ being the Ritz projection of $v_i$. Then
\begin{equation*}
\left\Vert e_i(y,t)\right\Vert\leq \left\Vert \theta_i
(y,t)\right\Vert+\left\Vert \rho_i (y,t)\right\Vert
\end{equation*}
and, by lemma \ref{errproj}, it follows that
\begin{equation*}
\left\Vert \rho_i (y,t)\right\Vert\leq Ch^{k+1}\left\Vert
v_i\right\Vert _{H^{k+1}}
\end{equation*}
Next, we determine an upper limit for $\left\Vert \theta_i (y,t)\right\Vert$. Let
\begin{equation*}
a_{i}^{(h)}=a_i(l(V_1),\dots,l(V_{ne})).
\end{equation*}
Then, for every $i\in\{1,\dots,ne\}$, we have that\\

$\displaystyle\int_{0}^{1}\pd{\theta_i}{t} W_i dy+a_{i}^{(h)}b_{2}\int_{0}^{1}\frac{\partial \theta_i
}{\partial y}\frac{\partial W_i}{\partial y}dy-\int_{0}^{1}b_{1}\frac{
\partial \theta_i }{\partial y}W_i dy $
\begin{eqnarray*}
&=&\int_{0}^{1}\pd{V_i}{t}W_idy+a_{i}^{(h)}b_{2}\int_{0}^{1}\frac{\partial V_i
}{\partial y}\frac{\partial W_i}{\partial y}dy-\int_{0}^{1}b_{1}\frac{
\partial V_i}{\partial y}W_i dy \\
&&-\int_{0}^{1}\pd{\tilde{V}_{i}}{t}W_i dy-a_{i}^{(h)}b_{2}\int_{0}^{1}
\frac{\partial \tilde{V}_i}{\partial y}\frac{\partial W_i}{
\partial y}dy+\int_{0}^{1}b_{1}\frac{\partial \tilde{V}_i}{
\partial y}W_i dy \\
&=&\int_{0}^{1}g_iW_{i}dy-\int_{0}^{1}\pd{v_{i}}{t}W_i dy-a_ib_{2}\int_{0}^{1}\frac{
\partial \tilde{V}_i}{\partial y}\frac{\partial W_i}{
\partial y}dy+\int_{0}^{1}b_{1}\frac{\partial v_i}{\partial y}W_i dy \\
&&+(a_i-a_{i}^{(h)})b_{2}\int_{0}^{1}\frac{\partial \tilde{V}_i}{
\partial y}\frac{\partial W_i}{\partial y}dy+\int_{0}^{1}b_{1}(\frac{\partial \tilde{V}_i}{\partial y}-
\frac{\partial v_i}{\partial y})W_i dy\\
&&+\int_{0}^{1}(\pd{v_{i}}{t}-\pd{\tilde{V}_{i}}{t})W_idy \\
&=&(a_i-a_{i}^{(h)})b_{2}\int_{0}^{1}\frac{\partial \tilde{V}_i}{
\partial y}\frac{\partial W_i}{\partial y}dy+\int_{0}^{1}b_{1}(\frac{
\partial \tilde{V}_i}{\partial y}-\frac{\partial v_i}{\partial y}
)W_i dy\\
&&+\int_{0}^{1}(\pd{v_{i}}{t}-\pd{\tilde{V}_{i}}{t})W_i dy.
\end{eqnarray*}
If we consider $W_i=\theta_i$, then\\

$\displaystyle
\int_{0}^{1}\pd{\theta_{i}}{t}\theta_i dy+a_{i}^{(h)}b_{2}\int_{0}^{1}\left( \frac{\partial \theta_i }{\partial y}\right) ^{2}dy-\int_{0}^{1}b_{1}\frac{\partial \theta_i }{\partial y}\theta_i dy$\\

\begin{flushright}
$\displaystyle
=(a_i-a_{i}^{(h)})b_{2}\int_{0}^{1}\frac{\partial \tilde{V}_i}{\partial y}\frac{\partial \theta_i }
{\partial y}dy+\int_{0}^{1}b_{1}\frac{\partial \rho_i }{\partial y}\theta_i
dy-\int_{0}^{1}\pd{\rho_{i}}{t}\theta_i dy.$
\end{flushright}
Integrating by parts the third term on the left side and the second term on the right side of the above equation, we obtain\\

$\displaystyle
\int_{0}^{1}\frac{1}{2}\frac{d}{dt}\theta_i
^{2}dy+a_{i}^{(h)}b_{2}\int_{0}^{1}\left( \frac{\partial \theta_i }{\partial y}
\right) ^{2}dy+\frac{\gamma ^{^{\prime }}(t)}{2\gamma (t)}\int_{0}^{1}\theta_i
^{2}dy$\\

\begin{flushright}
$\displaystyle
=(a_i-a_{i}^{(h)})b_{2}\int_{0}^{1}\frac{\partial \tilde{V}_i
}{\partial y}\frac{\partial \theta_i }{\partial y}dy-\int_{0}^{1}\pd{\rho_{i}}{t}\theta_i dy-\frac{\gamma ^{^{\prime }}(t)}{\gamma (t)}%
\int_{0}^{1}\rho_i \theta_i dy-\int_{0}^{1}b_{1}\rho_i \frac{\partial \theta_i }{
\partial y}dy.$
\end{flushright}
Taking the absolute value of the right side of this equation, ignoring the third term on the left side and considering the lower limits of $a$ and $b_{i}$, it follows that\\
$\displaystyle\frac12\frac{d}{dt}\|\theta_i\|^2+\frac{m_a}{\gamma_1^2}\left\|\pd{\theta_i}{y}\right\|^2$\\

$\displaystyle
\leq\left\vert a_i-a_{i}^{(h)}\right\vert \frac{1}{\gamma _{0}^{2}}\int_{0}^{1}\left\vert \frac{\partial\tilde{V}_i}
{\partial y}\right\vert \left\vert \frac{\partial \theta_i }{\partial y}\right\vert dy+\int_{0}^{1}\left\vert \pd{\rho_{i}}{t}\right\vert
\left\vert \theta_i \right\vert dy+\frac{\gamma _{\max }^{^{\prime }}}{\gamma _{0}}\int_{0}^{1}\left\vert
\rho_i \right\vert \left\vert \theta_i \right\vert dy$\\

$\displaystyle\quad +\frac{\alpha _{\max
}^{\prime }+\gamma _{\max }^{^{\prime }}}{\gamma _{0}}\int_{0}^{1}\left\vert
\rho_i \right\vert \left\vert \frac{\partial \theta_i }{\partial y}\right\vert dy$\\

$\displaystyle\leq C_{1}\left\vert a_i-a_{i}^{(h)}\right\vert^2+ \frac{m_a}{2\gamma _{1}^{2}}
\int_{0}^{1}\left\vert \frac{\partial \theta_i }{\partial y}\right\vert ^{2}dy+
\frac{1}{2}\int_{0}^{1}\left\vert \pd{\rho_{i}}{t}\right\vert ^{2}dy+\frac{1}{2}
\int_{0}^{1}\left\vert \theta_i \right\vert ^{2}dy$\\

$\displaystyle\quad +\frac{\gamma _{\max }^{^{\prime }}}{2\gamma _{0}}\int_{0}^{1}\left\vert
\rho_i \right\vert ^{2}dy+\frac{\gamma _{\max }^{^{\prime }}}{2\gamma _{0}}
\int_{0}^{1}\left\vert \theta_i \right\vert ^{2}dy+C_{2}\int_{0}^{1}\left\vert
\rho_i \right\vert ^{2}dy+\frac{m_{a}}{2\gamma _{1}^{2}}\int_{0}^{1}\left\vert
\frac{\partial \theta_i }{\partial y}\right\vert ^{2}dy.$\\
Then, by (H5) we have that
\begin{eqnarray*}
\frac{1}{2}\frac{d}{dt}\left\Vert \theta_i \right\Vert ^{2} &\leq
&C_{3}\sum_{j=1}^{ne}\left\Vert \rho_j \right\Vert^{2}+C_{4}\sum_{j=1}^{ne}\left\Vert \theta_j
\right\Vert ^{2}+\frac{1}{2}\left\Vert \pd{\rho_{i}}{t}\right\Vert
^{2}+\frac{1}{2}\left\Vert \theta_i \right\Vert^{2}
\\
&&+\frac{\gamma _{\max }^{^{\prime }}}{2\gamma _{0}}\left\Vert \rho_i
\right\Vert^{2}+\frac{\gamma _{\max }^{^{\prime }}}{2\gamma
_{0}}\left\Vert \theta_i \right\Vert^{2}+C_{2}\left\Vert \rho_i
\right\Vert^{2} \\
&\leq &C\sum_{j=1}^{ne}\left\Vert \theta_j \right\Vert^{2}+C\sum_{j=1}^{ne}\left\Vert \rho_j
\right\Vert^{2}+\frac{1}{2}\left\Vert \pd{\rho_{i}}{t}\right\Vert^{2}.
\end{eqnarray*}
and hence, we obtain\\
$$ \frac{d}{dt}\left(\sum_{i=1}^{ne}\left\Vert \theta_i \right\Vert ^{2}\right)\leq C\sum_{i=1}^{ne}\left\Vert \theta_i \right\Vert^{2}+C\sum_{i=1}^{ne}\left\Vert \rho_i
\right\Vert^{2}+\sum_{i=1}^{ne}\left\Vert \pd{\rho_{i}}{t}\right\Vert^{2}.$$
Applying Gronwall's Theorem, we arrive at the inequality
\begin{equation*}
\sum_{i=1}^{ne}\left\Vert \theta_i \right\Vert^{2}\leq C\sum_{i=1}^{ne}\left\Vert \theta_i
(y,0)\right\Vert^{2}+C\sum_{i=1}^{ne}\int_{0}^{T}\left\Vert \rho_i \right\Vert^{2}+\left\Vert \pd{\rho_{i}}{t}\right\Vert^{2}dt.
\end{equation*}
By the hypothesis of the theorem, we have, for every $i\in{1,\dots,ne}$,
\begin{eqnarray*}
\left\Vert \theta_i (y,0)\right\Vert^{2} &\leq &\left\Vert
e_i(y,0)\right\Vert^{2}=\left\Vert V_i(y,0)-v_{i0}\right\Vert
^{2}\leq Ch^{2(k+1)}\|v_{i0}\|_{H^{k+1}}^{2}, \\
\int_{0}^{T}\left\Vert \rho_i \right\Vert^{2} \ dt&\leq
&CTh^{2(k+1)}\left\Vert v_i\right\Vert _{H^{k+1}}^{2},\\
\int_{0}^{T}\left\Vert \pd{\rho_{i}}{t} \right\Vert^{2}\ dt &\leq
&CTh^{2(k+1)}\left\|\pd{v_{i}}{t}\right\|_{H^{k+1}}^2
\end{eqnarray*}
and so
\begin{equation*}
\displaystyle\sum_{i=1}^{ne}\left\Vert \theta_i \right\Vert^{2}\leq C\left( \sum_{i=1}^{ne}\left\Vert
v_{i0}\right\Vert _{H^{k+1}}^{2}+\sum_{i=1}^{ne}\left\Vert v_i\right\Vert
_{H^{k+1}}^{2} +\sum_{i=1}^{ne}\left\Vert \pd{v_i}{t}\right\Vert
_{H^{k+1}}^{2} \right) h^{2(k+1)}.\\
\end{equation*}
Hence
\begin{equation*}
\left\Vert \theta_i \right\Vert\leq Ch^{k+1},\quad i=1,\dots ne
\end{equation*}
and adding the estimate of $\rho_i$, we obtain the desired result.
\end{proof}

\section{Discrete problem}

Let $\delta >0$ and consider the partition $]0,T]=\overset{n_{i}-1}{\underset%
{j=1}{\cup }}]t_{j-1},t_{j}]=\overset{n_{i}-1}{\underset{j=1}{\cup }}I_{j},$
$\delta =t_{j}-t_{j-1}$ and $int(I_{j})\cap int(I_{i})=\emptyset $. The time
discretization is made utilizing the Crank-Nicolson method. Let $\mathbf{V}%
^{(n)}(y)$ be the approximation of $\mathbf{v}(y,t_{n})$, in the space $%
(S_{h}^{k})^{ne}$. This method evaluates the equation at the points $t_{n-1/2}=\frac{t_n+t_{n-1}}2$, $n=1,\dots, ni$, and uses the approximations
\begin{equation*}
\mathbf{V}(y,t_{n-1/2})\approx\frac{\mathbf{V}^{(n)}(y)+\mathbf{V}^{(n-1)}(y)}{2}=\hat{\mathbf{V}}^{(n)}(y)
\end{equation*}%
and
\begin{equation*}
\frac{\partial \mathbf{V}}{\partial t}(y,t_{n-1/2})\approx \frac{\mathbf{V}^{(n)}(y)-\mathbf{V}
^{(n-1)}(y)}{\delta }=\overline{\partial }\mathbf{V}^{(n)}(y).
\end{equation*}
Then we have the problem of finding $\mathbf{V}^{(n)}\in (S_{h}^{k})^{ne}$
such that it is zero on the boundary of $\Omega $, satisfies $
V_{i}^{(0)}=I_{h}(v_{i0})$, $i=1,\dots ,ne$, and
\begin{equation*}
\int_{0}^{1}\overline{\partial }V_{i}^{(n)}W_{i}\ dy+a_{i}(l(\hat{V}
_{1}^{(n)}),\dots ,l(\hat{V}_{ne}^{(n)}))b_{2}^{(n-1/2)}\int_{0}^{1}\frac{
\partial \hat{V}_{i}^{(n)}}{\partial y}\frac{\partial W_{i}}{\partial y}\ dy
\end{equation*}
\begin{equation}
-\int_{0}^{1}b_{1}^{(n-1/2)}\frac{\partial \hat{V}_{i}^{(n)}}{\partial y}
W_{i}\ dy=\int_{0}^{1}g_{i}^{(n-1/2)}W_{i}\ dy,  \label{C_N}
\end{equation}%
with $f^{(n-1/2)}=f(y,t_{n-1/2})$.\newline
System (\ref{C_N}) is a non linear algebraic system due to the presence of
\linebreak $a_{i}(l(\hat{V}_{1}^{(n)}),\dots ,l(\hat{V}_{ne}^{(n)}))$.
Obtaining the solution of (\ref{C_N}) implies the use of an iterative method
in each time step. We could apply Newton's method, the fixed point method or
some secant method, but it would be very time consuming. In order to avoid
this, we choose a linearization method and, as suggested in \cite{Tho06}, we
substitute $\hat{V}_{i}^{(n)}$ with $\overline{V}_{i}^{(n)}=\frac{3}{2}%
V_{i}^{(n-1)}-\frac{1}{2}V_{i}^{(n-2)}$ in the diffusion coefficient. So the
totally discrete problem, in this case, will be to calculate the functions $%
\mathbf{V}^{(n)}$, $n\geq 2$, belonging to $(S_{h}^{k})^{ne}$, which are
zero on the boundary of $\Omega $ and satisfy
\begin{equation*}
\int_{0}^{1}\overline{\partial }V_{i}^{(n)}W_{i}\ dy+a_{i}(l(\overline{V}
_{1}^{(n)}),\dots ,l(\overline{V}_{ne}^{(n)}))b_{2}^{(n-1/2)}\int_{0}^{1}\frac{
\partial \hat{V}_{i}^{(n)}}{\partial y}\frac{\partial W_{i}}{\partial y}\ dy
\end{equation*}
\begin{equation}
-\int_{0}^{1}b_{1}^{(n-1/2)}\frac{\partial \hat{V}_{i}^{(n)}}{\partial y}
W_{i}\ dy=\int_{0}^{1}g_{i}^{(n-1/2)}W_{i}\ dy,\quad n\geq 2,\quad i=1,\dots
,ne.  \label{descn}
\end{equation}%
In this way, we have a linear multistep method which requires two initial
estimates $\mathbf{V}^{(0)}$ and $\mathbf{V}^{(1)}$. The estimate $\mathbf{V}
^{(0)}$ is obtained by the initial condition as $V_{i}^{(0)}=I_{h}(v_{i0})$.
In order to calculate $\mathbf{V}^{(1)}$ with the same accuracy, we follow
\cite{Tho06} and we use the following predictor-corrector scheme.
\begin{equation*}
\int_{0}^{1}\frac{V_{i}^{(1,0)}-V_{i}^{(0)}}{\delta }W_{i}\
dy+a_{i}(l(V_{1}^{(0)}),\dots ,l(V_{ne}^{(0)}))b_{2}^{(1/2)}
\end{equation*}
\begin{equation*}
\times\int_{0}^{1}
\frac{\partial }{\partial y}\left( \frac{V_{i}^{(1,0)}+V_{i}^{(0)}}{2}%
\right) \frac{\partial W_{i}}{\partial y}\ dy-\int_{0}^{1}b_{1}^{(1/2)}\frac{\partial }{\partial y}\left( \frac{%
V_{i}^{(1,0)}+V_{i}^{(0)}}{2}\right) W_{i}\ dy
\end{equation*}
\begin{equation}
=\int_{0}^{1}g_{i}^{(1/2)}W_{i}\ dy,\quad i=1,\dots ,ne.  \label{desc10}
\end{equation}
\begin{equation*}
\int_{0}^{1}\overline{\partial }V_{i}^{(1)}W_{i}\ dy+a_{i}\left( l\left(
\frac{V_{1}^{(1,0)}+V_{1}^{(0)}}{2}\right) ,\dots ,l\left( \frac{
V_{ne}^{(1,0)}+V_{ne}^{(0)}}{2}\right) \right) b_{2}^{(1/2)}
\end{equation*}
\begin{equation}
\times\int_{0}^{1}\frac{\partial \hat{V}_{i}^{(1)}}{\partial y}\frac{\partial W_{i}}{\partial y%
}\ dy-\int_{0}^{1}b_{1}^{(1/2)}\frac{\partial \hat{V}_{i}^{(1)}}{\partial y}%
W_{i}\ dy=\int_{0}^{1}g_{i}^{(1/2)}W_{i}\ dy,\quad i=1,\dots ,ne.
\label{desc1}
\end{equation}

\begin{theorem}
\label{conv_d} If $\mathbf{v}$ is the solution of equation (\ref{prob1}) and
$\mathbf{V}^{(n)}$ is the solution of (\ref{descn})-(\ref{desc1}), then
\begin{equation*}
\Vert V_{i}^{(n)}(y)-v_{i}(y,t_{n})\Vert \leq C(h^{k+1}+\delta ^{2}),\quad
n=1,\dots ,nt,\quad i=1,\dots ,ne,
\end{equation*}%
where $C$ does not depend on $h$, $k$ or $\delta $.
\end{theorem}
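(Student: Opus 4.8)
The plan is to reproduce the structure of the proof of Theorem \ref{conv_h}, replacing the continuous-in-time Gronwall argument with a discrete one adapted to the Crank-Nicolson stepping. At each time level I would split the error as $V_i^{(n)}-v_i(\cdot,t_n)=\theta_i^{(n)}+\rho_i^{(n)}$, where $\rho_i^{(n)}=\tilde V_i^{(n)}-v_i(\cdot,t_n)$ is the Ritz-projection error of $v_i(\cdot,t_n)$ and $\theta_i^{(n)}=V_i^{(n)}-\tilde V_i^{(n)}\in S_h^k$. Lemma \ref{errproj} at once gives $\|\rho_i^{(n)}\|\le Ch^{k+1}\|v_i(\cdot,t_n)\|_{H^{k+1}}$ (and the same order for the difference quotient $\overline{\partial}\rho_i^{(n)}$, since it is essentially the Ritz error of a time-averaged derivative), so the whole problem reduces to bounding $\|\theta_i^{(n)}\|$.

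Next I would derive the error equation for $\theta_i^{(n)}$ by subtracting the fully discrete scheme (\ref{descn}) from the weak formulation (\ref{fracav}) evaluated at the midpoint $t_{n-1/2}$, and test with $W_i=\hat\theta_i^{(n)}=(\theta_i^{(n)}+\theta_i^{(n-1)})/2$, the natural partner of $\overline{\partial}\theta_i^{(n)}$ because $\int_0^1\overline{\partial}\theta_i^{(n)}\,\hat\theta_i^{(n)}\,dy=\frac{1}{2\delta}\left(\|\theta_i^{(n)}\|^2-\|\theta_i^{(n-1)}\|^2\right)$. Integrating the convection term by parts exactly as in the semidiscrete proof, and using the lower bounds $a_i\ge m_a$ from (H5) and $b_2\ge 1/\gamma_1^2$ to keep the diffusion term coercive, the right-hand side collects four families of consistency terms: (i) the Ritz contributions in $\rho_i^{(n)}$ and $\overline{\partial}\rho_i^{(n)}$; (ii) the time-truncation term $\overline{\partial}v_i(\cdot,t_n)-\partial_t v_i(\cdot,t_{n-1/2})$; (iii) the midpoint-symmetry error $\frac12\!\left(v_i(\cdot,t_n)+v_i(\cdot,t_{n-1})\right)-v_i(\cdot,t_{n-1/2})$ arising in the diffusion and convection coefficients; and (iv) the linearization error from replacing the nonlocal argument $l(\hat V_j^{(n)})$ by the extrapolation $l(\overline V_j^{(n)})$ inside $a_i$.

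For families (ii) and (iii) I would Taylor-expand $v_i$ about $t_{n-1/2}$; the symmetry of Crank-Nicolson cancels the first-order terms and leaves $L_2$ remainders of order $C\delta^2$, using the regularity of $\mathbf v$ assumed after Definition \ref{fraca}. Family (iv) is where the main difficulty lies: by the Lipschitz hypothesis (H5) the coefficient difference is controlled by $\sum_j\left|l(\overline V_j^{(n)})-l(v_j(\cdot,t_{n-1/2}))\right|$, and I would split each factor into the extrapolation-consistency error $\frac32 v_j(\cdot,t_{n-1})-\frac12 v_j(\cdot,t_{n-2})-v_j(\cdot,t_{n-1/2})$, which is $O(\delta^2)$ for smooth $v_j$, plus the linear form applied to the accumulated discrete errors $\theta_j^{(n-1)},\theta_j^{(n-2)},\rho_j^{(n-1)},\rho_j^{(n-2)}$. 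The appearance of $\theta$ at the two previous levels is precisely what forces a genuine discrete Gronwall argument rather than a one-step estimate, and care is needed to absorb $\|\theta_j^{(n-1)}\|$ and $\|\theta_j^{(n-2)}\|$ into the summed left-hand side after applying Cauchy-Schwarz and Young's inequalities.

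Summing the resulting one-step inequality over $n$ and invoking the discrete Gronwall lemma would then yield
\[
\sum_{i=1}^{ne}\|\theta_i^{(n)}\|^2\le C\left(\sum_{i=1}^{ne}\|\theta_i^{(0)}\|^2+\sum_{i=1}^{ne}\|\theta_i^{(1)}\|^2\right)+C\left(h^{2(k+1)}+\delta^{4}\right),
\]
where the telescoped consistency sums contribute $O(h^{2(k+1)}+\delta^{4})$ (each squared truncation term is $O(\delta^4)$ and the $O(1/\delta)$ steps are weighted by $\delta$). To close the estimate I would treat the two starting levels: $\theta_i^{(0)}$ is $O(h^{k+1})$ since $V_i^{(0)}=I_h v_{i0}$ and Lemma \ref{errint} bounds $\|I_h v_{i0}-\tilde V_i^{(0)}\|$, while $\theta_i^{(1)}$ must be shown to be $O(h^{k+1}+\delta^2)$ by analysing the predictor-corrector pair (\ref{desc10})-(\ref{desc1}) on its own with the same midpoint expansions. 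Taking square roots and adding back the Ritz bound for $\rho_i^{(n)}$ then gives $\|V_i^{(n)}-v_i(\cdot,t_n)\|\le C(h^{k+1}+\delta^2)$. I expect family (iv), together with the correct propagation of the two-level extrapolation error through the discrete Gronwall step, to be the principal technical obstacle.
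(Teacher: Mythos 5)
Your proposal follows essentially the same route as the paper's proof: the same $\theta_i^{(n)}+\rho_i^{(n)}$ splitting via the Ritz projection, testing the midpoint error equation with $\hat\theta_i^{(n)}$, Taylor expansion about $t_{n-1/2}$ for the $O(\delta^2)$ consistency terms, Lipschitz control (H5) of the nonlocal coefficient with the extrapolation $\overline V_j^{(n)}$ producing the two-level terms $\|\theta_j^{(n-1)}\|,\|\theta_j^{(n-2)}\|$ absorbed by a discrete Gronwall iteration, and a separate analysis of the predictor-corrector start-up (\ref{desc10})--(\ref{desc1}). The only detail worth noting is that in the paper the predictor step alone yields $\|\theta_i^{(1,0)}\|\leq C(h^{k+1}+\delta^{3/2})$, since its coefficient uses $l(V_j^{(0)})$ and incurs an $O(\delta)$ error, and it is the corrector that restores the full $O(h^{k+1}+\delta^2)$ accuracy for $\theta_i^{(1)}$ --- consistent with, though not spelled out in, your plan.
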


\begin{proof}
First we will determine the estimate for $n=1$.
Let $\theta_i^{(1,0)}=V_i^{(1,0)}-\tilde v_i^{(1)}$, $\hat
\theta_i^{(1,0)}=\frac{\theta_i^{(1,0)}+\theta_i^{(0)}}{2}$ and
$\overline{\partial}\theta_i^{(1,0)}=\frac{\theta_i^{(1,0)}-\theta_i^{(0)}}{\delta}$.
We have that\\

\noindent$\displaystyle \int_0^1\overline{\partial}\theta_i^{(1,0)} W_i\ dy+a_i(l(V_1^{(0)}),\dots,l(V_{ne}^{(0)}))b_2^{(1/2)}\int_0^1\frac{\partial\hat \theta_i^{(1,0)}}{\partial y}\frac{\partial W_i}{\partial y}\ dy$\\

\noindent$\displaystyle\phantom{=}-\int_0^1b_1^{(1/2)}\frac{\partial\hat \theta_i^{(1,0)}}{\partial y} W_i\ dy$\\

\noindent$\displaystyle=\int_0^1\overline{\partial}V_i^{(1,0)} W_i\ dy+a_i(l(V_1^{(0)}),\dots,l(V_{ne}^{(0)}))b_2^{(1/2)}\int_0^1\frac{\partial\hat V_i^{(1,0)}}{\partial y}\frac{\partial W_i}{\partial y}\ dy$\\

\noindent$\displaystyle\phantom{=}-\int_0^1b_1^{(1/2)}\frac{\partial\hat V_i^{(1,0)}}{\partial y} W_i\ dy-
\int_0^1\overline{\partial}\tilde v_i^{(1)} W_i\ dy$\\

\noindent$\displaystyle\phantom{=}-a_i(l(V_1^{(0)}),\dots,l(V_{ne}^{(0)}))b_2^{(1/2)}\int_0^1\frac{\partial\hat{\tilde v}_i^{(1)}}{\partial y}\frac{\partial W_i}{\partial y}\ dy+\int_0^1b_1^{(1/2)}\frac{\partial\hat{\tilde v}_i^{(1)}}{\partial y} W_i\ dy$\\

\noindent$\displaystyle=\int_0^1g_i^{(1/2)}W_i\ dy-\int_0^1\left(\pd{v_i}{t}\right)^{(1/2)} W_i\ dy-a_i(l(v_1^{(1/2)}),\dots,l(v_{ne}^{(1/2)}))b_2^{(1/2)}$\\

\noindent$\displaystyle\phantom{=}\times\int_0^1\frac{\partial v_i^{(1/2)}}{\partial y}\frac{\partial W_i}{\partial y}\ dy+\int_0^1b_1^{(1/2)}\frac{\partial v_i^{(1/2)}}{\partial y} W_i\ dy+\int_0^1\left(\left(\pd{v_i}{t}\right)^{(1/2)}-\overline\partial\tilde v_i^{(1)}\right)W_i\ dy$\\

\noindent$\displaystyle\phantom{=}+b_2^{(1/2)}\int_0^1\left(a_i(l(v_1^{(1/2)}),\dots,l(v_{ne}^{(1/2)}))\pd{v_1^{(1/2)}}{y}\right.$\\

\noindent$\displaystyle\phantom{=}\left.-a_i(l(V_1^{(0)}),\dots,l(V_{ne}^{(0)}))\pd{\hat{v}_i^{(1)}}{y}\right)\pd{W_i}{y}\ dy+\int_0^1b_1^{(1/2)}\left(\pd{\hat{\tilde v}_i^{(1)}}{y}-\pd{v_i^{(1/2)}}{y}\right)W_i\ dy$\\

\noindent$\displaystyle=\int_0^1\left(\left(\pd{v_i}{t}\right)^{(1/2)}-\overline\partial\tilde v_i^{(1)}\right)W_i\ dy+a_i(l(v_1^{(1/2)}),\dots,l(v_{ne}^{(1/2)}))b_2^{(1/2)}$\\

\noindent$\displaystyle\phantom{=}\times\int_0^1 \left(\pd{v_i^{(1/2)}}{y}-\pd{\hat{v}_i^{(1)}}{y}\right)\pd{W_i}{y}\ dy+\left(a_i(l(v_1^{(1/2)}),\dots,l(v_{ne}^{(1/2)}))\right.$\\

\noindent$\displaystyle\phantom{=}\left.-a_i(l(V_1^{(0)}),\dots,l(V_{ne}^{(0)}))\right)b_2^{(1/2)}\int_0^1 \pd{\hat{v}_i^{(1)}}{y}\pd{W_i}{y}\ dy$\\

\noindent$\displaystyle\phantom{=}+\int_0^1b_1^{(1/2)}\left(\pd{\hat{\tilde v}_i^{(1)}}{y}-\pd{v_i^{(1/2)}}{y}\right)W_i\ dy.$\\

\noindent Setting  $W_i=\hat\theta_i^{(1,0)}$, we arrive at\\

\noindent$\displaystyle\int_0^1\overline{\partial}\theta_i^{(1,0)} \hat\theta_i^{(1,0)}\ dy+a_i(l(V_1^{(0)}),\dots,l(V_{ne}^{(0)}))b_2^{(1/2)}\int_0^1\left(\frac{\partial\hat \theta_i^{(1,0)}}{\partial y}\right)^2 dy$\\

\noindent$\displaystyle-\int_0^1b_1^{(1/2)}\frac{\partial\hat \theta_i^{(1,0)}}{\partial y} \hat\theta_i^{(1,0)}\ dy$\\

\noindent$\displaystyle=\int_0^1\left(\left(\pd{v_i}{t}\right)^{(1/2)}-\overline\partial\tilde v_i^{(1)}\right)\hat\theta_i^{(1,0)}\ dy$\\

\noindent$\displaystyle\phantom{=}+a_i(l(v_1^{(1/2)}),\dots,l(v_{ne}^{(1/2)}))b_2^{(1/2)}\int_0^1 \left(\pd{v_i^{(1/2)}}{y}-\pd{\hat{v}_i^{(1)}}{y}\right)\pd{\hat\theta_i^{(1,0)}}{y}\ dy$\\

\noindent$\displaystyle\phantom{=}+\left(a_i(l(v_1^{(1/2)}),\dots,l(v_{ne}^{(1/2)}))-a_i(l(V_1^{(0)}),\dots,l(V_{ne}^{(0)}))\right)b_2^{(1/2)}$\\

\noindent$\displaystyle\phantom{=}\times\int_0^1 \pd{\hat{v}_i^{(1)}}{y}\pd{\hat\theta_i^{(1,0)}}{y}\ dy+\int_0^1b_1^{(1/2)}\left(\pd{\hat{\tilde v}_i^{(1)}}{y}-\pd{v_i^{(1/2)}}{y}\right)\hat\theta_i^{(1,0)}\ dy.$\\

\noindent Applying integration by parts and hypothesis $H_1$ and
$H_2$, it follows that
$$-\int_0^1b_1^{(1/2)}\frac{\partial\hat \theta_i^{(1,0)}}{\partial y} \hat\theta_i^{(1,0)}\ dy=-\int_0^1b_1^{(1/2)}\frac12\frac{\partial(\hat\theta_i^{(1,0)})^2}{\partial y}\ dy$$
$$=\frac{(\gamma')^{(1/2)}}{2\gamma^{(1/2)}}\int_0^1(\hat\theta_i^{(1,0)})^2\ dy\geq 0$$
and
$$\int_0^1b_1^{(1/2)}\left(\pd{\hat{\tilde v}_i^{(1)}}{y}-\pd{v_i^{(1/2)}}{y}\right)\hat\theta_i^{(1,0)}\ dy$$
$$=-\frac{(\gamma')^{(1/2)}}{\gamma^{(1/2)}}\int_0^1 \left(\hat{\tilde v}_i^{(1)}-v_i^{(1/2)}\right)\hat\theta_i^{(1,0)}\ dy-
\int_0^1 b_1^{(1/2)}(\hat{\tilde v}_i^{(1)}-v_i^{(1/2)})\frac{\partial\hat\theta_i^{(1,0)}}{\partial y}\ dy.$$
Then, by the Poincaré and Hölder inequalities, we can conclude that\\

$\displaystyle
\frac12\overline{\partial}\|\theta_i^{(1,0)}\|^2+\frac{m_a}{\gamma_0^2}\left\|\frac{\partial\hat\theta_i^{(1,0)}}{\partial y}\right\|^2\leq$
$$\leq C \left( \left\|\left(\pd{v_i}{t}\right)^{(1/2)}-\overline\partial\tilde v_i^{(1)}\right\|+\left\|\pd{v_i^{(1/2)}}{y}-\pd{\hat{v}_i^{(1)}}{y}\right\|+\sum_{j=1}^{ne}\|v_{j}^{(1/2)}-V_{j}^{(0)}\|\right.$$
$$+\|\hat{\tilde v}_i^{(1)}-v_i^{(1/2)}\|\Bigg)\left\|\frac{\partial\hat\theta_i^{(1,0)}}{\partial y}\right\|.$$
Using Cauchy's inequality, we have that
\begin{eqnarray*}
\overline{\partial}\|\theta_i^{(1,0)}\|^2&\leq& C \left( \left\|\left(\pd{v_i}{t}\right)^{(1/2)}-\overline\partial\tilde v_i^{(1)}\right\|+\left\|\pd{v_i^{(1/2)}}{y}-\pd{\hat{v}_i^{(1)}}{y}\right\|\right.\\ &&+\sum_{j=1}^{ne}\|v_{j}^{(1/2)}-V_{j}^{(0)}\|+\|\hat{\tilde v}_i^{(1)}-v_i^{(1/2)}\|\Bigg).
\end{eqnarray*}
The following estimates are true for every $i\in\{1,\dots,ne\}$,
\begin{eqnarray*}
\left\|\left(\pd{v_i}{t}\right)^{(1/2)}-\overline\partial\tilde v_i^{(1)}\right\|&\leq&\left\|\left(\pd{v_i}{t}\right)^{(1/2)}-\overline\partial v_i^{(1)}\right\|+\|\overline\partial v_i^{(1)}-\overline\partial\tilde v_i^{(1)}\|\\
&\leq &C\delta^2+C h^{k+1},\\
\left\|\pd{v_i^{(1/2)}}{y}-\pd{\hat{v}_i^{(1)}}{y}\right\|&\leq& C\delta\int_{t_0}^{t_1}\left\|\frac{\partial^3 v_{i}}{\partial y\partial t^2}\right\|\ dt\leq C\delta^2,\\
\|v_{i}^{(1/2)}-V_{i}^{(0)}\|&\leq&\|v_{i}^{(1/2)}-v_{i}^{(0)}\|+\|v_{i}^{(0)}-V_{i}^{(0)}\|\leq C\delta+C h^{k+1},\\
\end{eqnarray*}
and
$$\|\hat{\tilde v}_i^{(1)}-v_i^{(1/2)}\|\leq  \|\hat{\tilde v}_i^{(1)}-\hat{\tilde v}_i^{(1/2)}\|+\|\hat{\tilde v}_i^{(1/2)}-v_i^{(1/2)}\|\leq C\delta^2+C h^{k+1}.$$
Hence
$$\overline{\partial}\|\theta_i^{(1,0)}\|^2\leq C (h^{k+1}+\delta)^2,$$
and, we have the estimate
$$\|\theta_i^{(1,0)}\|^2\leq\|\theta_i^{(0)}\|^2+ C\delta (h^{k+1}+\delta)^2\leq C(h^{2(k+1)}+\delta^3),\quad i=1,\dots,ne.$$
Repeating this process for equation (\ref{desc1}), we arrive at\\

$\displaystyle
\frac12\overline{\partial}\|\theta_i^{(1)}\|^2+\frac{m_a}{\gamma_0^2}\left\|\frac{\partial\hat\theta_i^{(1)}}{\partial y}\right\|^2$
\begin{eqnarray*}
&\leq& C \left( \left\|\left(\pd{v_i}{t}\right)^{(1/2)}-\overline\partial\tilde v_i^{(1)}\right\|+\left\|\pd{v_i^{(1/2)}}{y}-\pd{\hat{v}_i^{(1)}}{y}\right\|\right.\\
&&\left.+\sum_{j=1}^{ne}\left\|v_{j}^{(1/2)}-\frac{V_{j}^{(1,0)}-V_{j}^{(0)}}{2}\right\|+\|\hat{\tilde v}_i^{(1)}-v_i^{(1/2)}\|\right)\left\|\frac{\partial\hat\theta_i^{(1)}}{\partial y}\right\|.\\
\end{eqnarray*}
In this case, we use the estimate
\begin{eqnarray*}
\left\|v_{i}^{(1/2)}-\frac{V_{i}^{(1,0)}-V_{i}^{(0)}}{2}\right\|&\leq& \|v_{i}^{(1/2)}-\hat{\tilde v}_i^{(1)}\|+\|\hat{\tilde v}_i^{(1)}-\frac{V_{i}^{(1,0)}-V_{i}^{(0)}}{2}\|\\
&\leq& \|v_{i}^{(1/2)}-\hat{\tilde v}_i^{(1)}\|+\frac12\|\theta_i^{(1,0)}\|+\frac12\|\theta_i^{(0)}\|\\
&\leq& C(h^{k+1}+\delta^{2})+C h^{k+1}+C(h^{k+1}+\delta^{\frac32})\\
&\leq& C(h^{k+1}+\delta^{\frac32}),
\end{eqnarray*}
and then, by Cauchy's inequality, we conclude that
$$\overline{\partial}\|\theta_i^{(1)}\|^2\leq C (h^{2(k+1)}+\delta^3),$$
whence
$$\|\theta_i^{(1)}\|^2\leq\|\theta_i^{(0)}\|^2+ C\delta (h^{2(k+1)}+\delta^3)\leq C(h^{2(k+1)}+\delta^4).$$
To conclude the proof, we obtain the result for $n\geq2$, applying the same process to equation (\ref{descn}). In this way, we obtain\\

$\displaystyle
\frac12\overline{\partial}\|\theta_i^{(n)}\|^2+\frac{m_a}{\gamma_0^2}\left\|\frac{\partial\hat\theta_i^{(n)}}{\partial y}\right\|^2$
\begin{eqnarray*}
&\leq& C \left( \left\|\left(\pd{v_i}{t}\right)^{(n-1/2)}-\overline\partial\tilde v_i^{(n)}\right\|+\left\|\pd{v_i^{(n-1/2)}}{y}-\pd{\hat{v}_i^{(n)}}{y}\right\|\right.\\
&&\left.+\sum_{j=1}^{ne}\left\|v_{j}^{(n-1/2)}-\bar{V}_j^{(n)}\right\|+\|\hat{\tilde v}_i^{(n)}-v_i^{(n-1/2)}\|\right)\left\|\frac{\partial\hat\theta_i^{(n)}}{\partial y}\right\|.
\end{eqnarray*}
Now, we need the estimate
\begin{eqnarray*}
\left\|v_{i}^{(n-1/2)}-\bar{V}_i^{(n)}\right\|&\leq& \|v_{i}^{(n-1/2)}-\bar{v}_i^{(n)}\|+\|\bar{v}_i^{(n)}-\bar{V}_i^{(n)}\|\\
&\leq& \|v_{i}^{(n-1/2)}-\bar{v}_i^{(n)}\|+\|\overline\rho_i^{(n)}\|+\|\overline\theta_i^{(n)}\|\\
&\leq& C\delta^2+C h^{k+1}+C(\|\theta_{n-1}\|+\|\theta_{n-2}\|)
\end{eqnarray*}
to prove that
$$\overline{\partial}\|\theta_i^{(n)}\|^2\leq C\sum_{j=1}^{ne}\|\theta_j^{(n-1)}\|^2+C\sum_{j=1}^{ne}\|\theta_j^{(n-2)}\|^2+C(h^{(k+1)}+\delta^2)^2,\quad i=1,\dots,ne.$$
Summing up for all $i$, it follows that
$$\overline{\partial}\sum_{i=1}^{ne}\|\theta_i^{(n)}\|^2\leq C\sum_{j=1}^{ne}\|\theta_j^{(n-1)}\|^2+C\sum_{j=1}^{ne}\|\theta_j^{(n-2)}\|^2+C(h^{(k+1)}+\delta^2)^2.$$
Iterating, we obtain
$$\sum_{i=1}^{ne}\|\theta_i^{(n)}\|^2\leq(1+C\delta)\sum_{i=1}^{ne}\|\theta_i^{(n-1)}\|^2+ C\delta\sum_{i=1}^{ne}\|\theta_i^{(n-2)}\|^2 +C\delta (h^{k+1}+\delta^2)^2$$
$$\leq C\sum_{i=1}^{ne}\|\theta_i^{(1)}\|^2+ C\sum_{i=1}^{ne}\delta\|\theta_i^{(0)}\|^2+ C\delta (h^{k+1}+\delta^2)^2$$
and recalling the estimates for $\|\theta_i^{(0)}\|$, $\|\theta_i^{(1)}\|$ and
$\|\rho_i^{(n)}\|$, the proof is complete.
\end{proof}

%\begin{remark}
%If we substitute $\mathcal{T}_h$ by a regular partition of simplexes, such that no vertex of any simplex lies in the interior of a side of another simplex and $h=\max\{diam(T_i),i=1,\dots,nt\}$, then the results of Theorems \ref{conv_h} and \ref{conv_d} can be easily generalized to
%$$u_{t}-a\left( \int_{\Omega _{t}}u(x,t)dx\right) \Delta u=f(x,t)\,, \quad \text{in } Q_{t}\subset \mathbbm{R}^{n+1}\:\:(n\geq 1)$$
%where $Q_{t}$ is a bounded non-cylindrical domain, defined by
%$$Q_{t}= \bigcup_{0\leq t<\infty}\Omega_{t}\times \{t\} \,,$$
%with lateral boundary $\Sigma_{t}=\cup_{0\leq t<\infty}(\Gamma_{t}\times \{t\})$.
%\end{remark}

\section{Examples}

The final step is to implement this method using a programming language. To
perform this task, we choose the Matlab environment.

In this section, we present some examples to illustrate the applicability
and robustness of this method, comparing the results with the theoretical
results proved and with the results presented in \cite{RACFppb}.

\subsection{Example 1}

As a first example we simulate a problem with a known exact solution, which
will permit us to calculate the error and confirm numerically the
theoretical convergence rates. Let us consider problem (\ref{prob0}) with
two equations in $Q_{t}$ and $T=3$. The diffusion
coeficientes are
\begin{equation*}
a_1(r,s)=2-\frac{1}{1+r^2}+\frac{1}{1+s^2},\quad a_2(r,s)=3+\frac{2}{1+r^2}-%
\frac{1}{1+s^2},
\end{equation*}
the movement of the boundaries is given by the functions
\begin{equation*}
\alpha(t)=-\frac{t}{1+t},\quad \beta(t)=1+\frac{2t}{1+t},
\end{equation*}
the functions $f_1(x,t)$, $f_2(x,t)$, $u_{10}(x,t)$ and $u_{20}(x,t)$ are
chosen such that
\begin{equation*}
u_1(x,t)=\frac{1}{t+1}\left(\frac{611}{70}z-\frac{10513}{210}z^2+\frac{646}{7%
}z^3-\frac{1070}{21}z^4\right)
\end{equation*}
and
\begin{equation*}
u_2(x,t)=e^{-t}\left(\frac{2047}{140}z-\frac{27701}{420}z^2+\frac{691}{7}z^3-%
\frac{995}{21}z^4\right)
\end{equation*}
with
\begin{equation*}
z=\frac{(2t+1)(x+tx+t)}{5t^2+5t+1}
\end{equation*}
are the exact solutions.

\begin{figure}[!htb]
\center
\includegraphics[width=\textwidth]{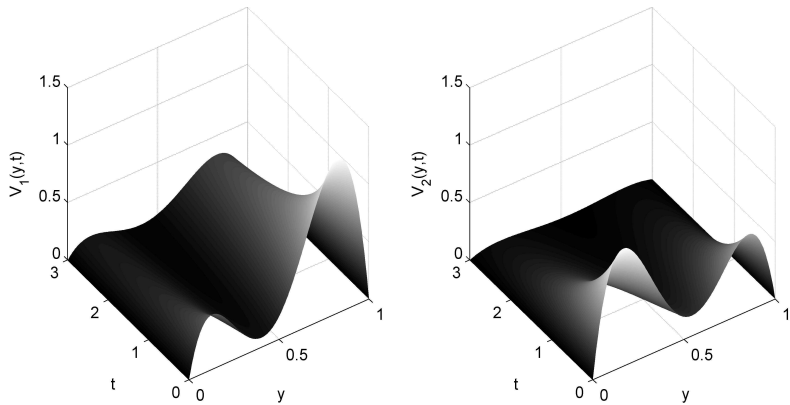}
\caption{Evolution in time of the approximated solution in the fixed
boundary problem for $v_{1}$ (left) and $v_{2}$ ( right).}
\label{ex1_sol_f}
\end{figure}
The picture on the left in Figure \ref{ex1_sol_f}
illustrates the evolution in time of the obtained solution for $v_1$ in the
fixed boundary problem, and the picture on the right illustrates the
evolution in time of the obtained solution for $v_2$. This solution was
calculated with approximations of degree two and $h=\delta=10^{-2}$.

\begin{figure}[!htb]
\center
\includegraphics[width=\textwidth]{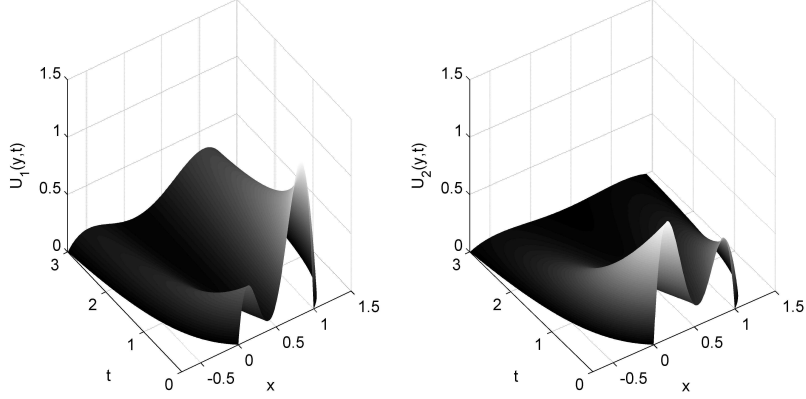}
\caption{Evolution in time of the approximated solution in the moving
boundary problem for $u_{1}$ (left) and $u_{2}$ ( right).}
\label{ex1_sol_m}
\end{figure}
The pictures in Figure \ref{ex1_sol_m} represent the obtained solutions in
the moving boundary domain, after applying the inverse transformation $\tau
^{-1}(y,t)$. In this case $u$ and $v$ could represent the density of two populations
of bacteria. We observe that, initially, each population is concentrated mainly in two
regions and, as time increases, the two populations decrease and spread out
in the domain, as expected.

In order to analyze the convergence rates, this problem was simulated with
different combinations of $k$, $h$ and $\delta $ and the error results are
represented in Figure \ref{ex1_O_h}.

\begin{figure}[tbh]
\centering
\includegraphics[width=0.32\textwidth]{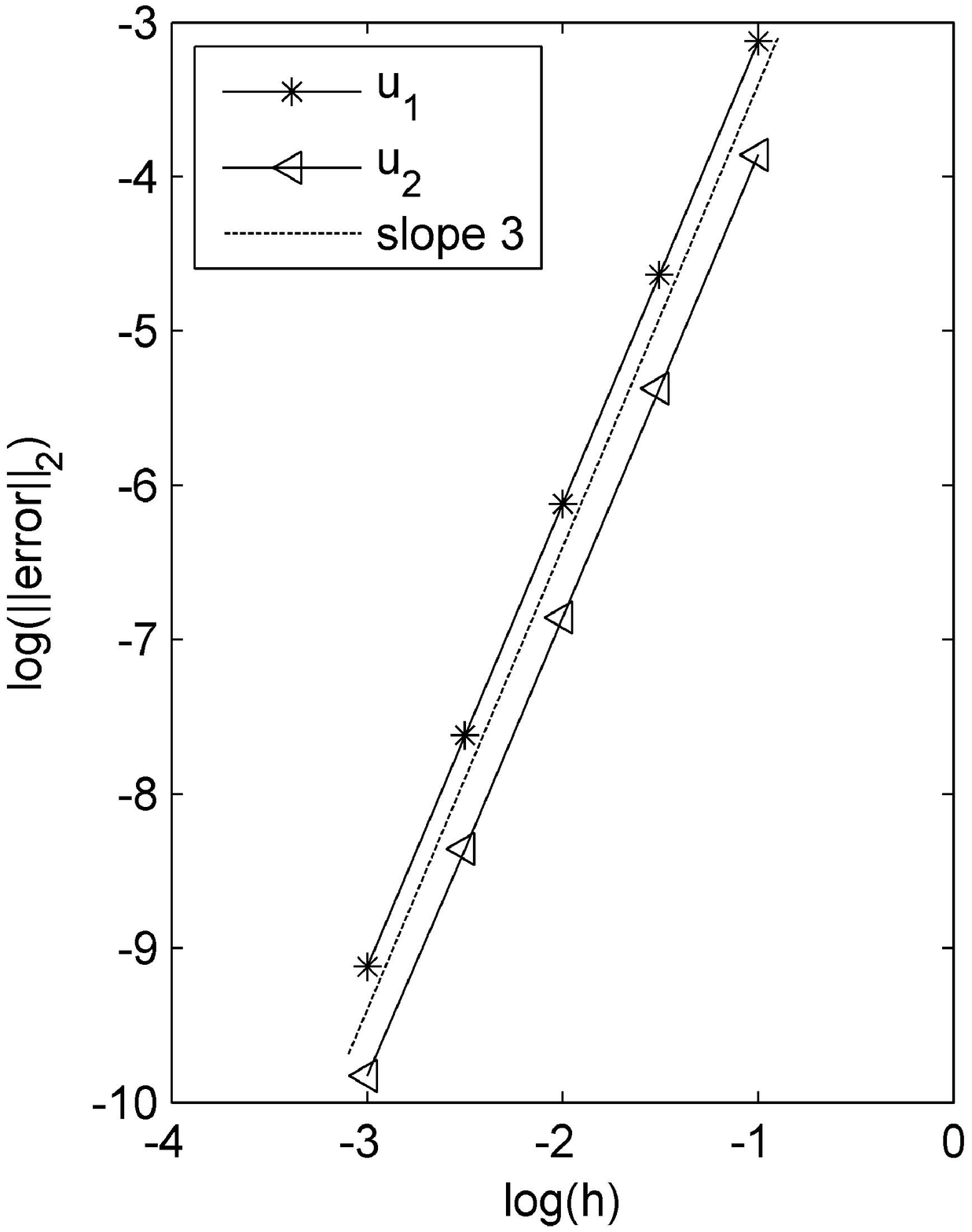} %
\includegraphics[width=0.32\textwidth]{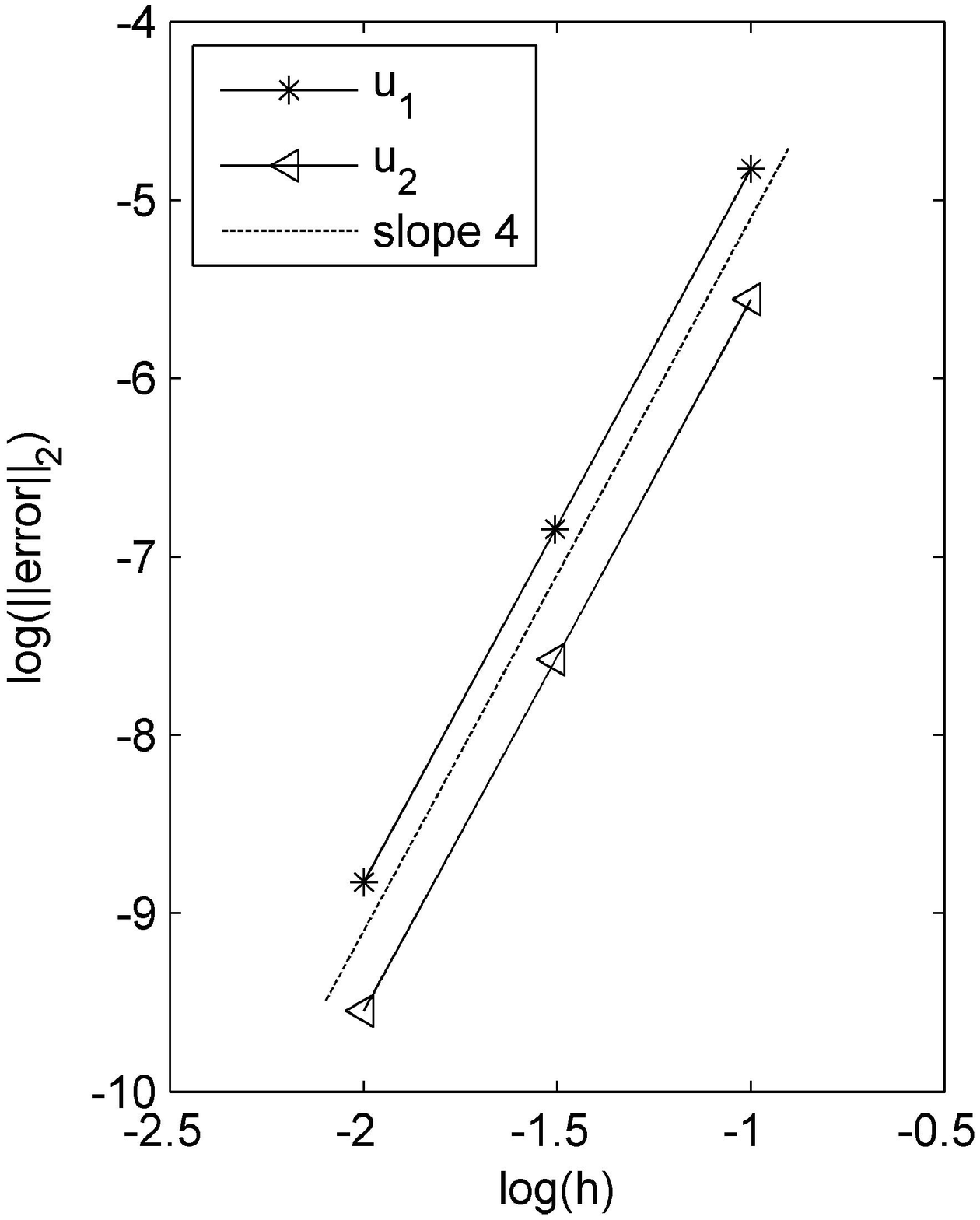} %
\includegraphics[width=0.32\textwidth]{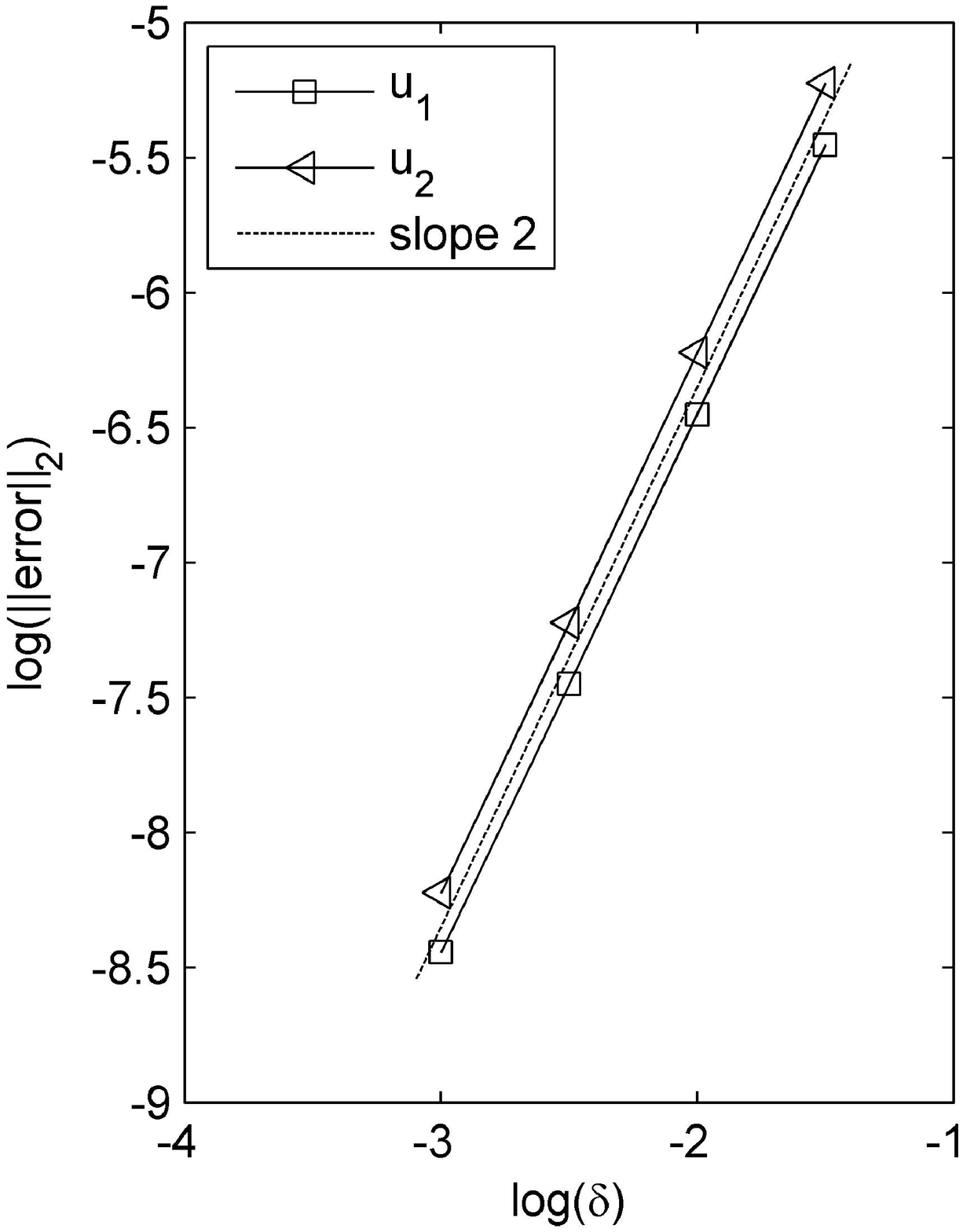}
\caption{Study of the convergence for $h$ with approximations of degree 2
(left) and 3 (center), and for $\protect\delta $ (right).}
\label{ex1_O_h}
\end{figure}
%\begin{figure}[!htb]
%\centering
%\includegraphics[width=0.45\textwidth]{ex1_d_gp1}
%\caption{Study of convergence for $\delta$.}\label{ex1_O_d}
%\end{figure}
The error was calculated in $t=T$ and using the $L_{2}(\alpha (T),\beta (T))$%
-norm in the space variable. In the  picture on the left  the logarithms of
the errors versus the logarithm of $h$ for the simulations done with $\delta
=10^{-4}$ and approximations of degree $2$, are represented.  The errors
versus the logarithm of $h$ for the simulations done with $\delta =10^{-4}$
and approximations of degree $3$ are represented in the picture in the
center. The logarithms of the errors versus the logarithm of $\delta $ for
the simulations done with $h=10^{-3}$ and approximations of degree $2$, are
represented in the picture on the right. As expected, the pictures are in
accordance with the orders of convergence for $h$ and $\delta $, as was
proved in Theorem \ref{conv_d}. In Table \ref{tabela1} we compare the error
of the present method with the error of the moving finite element method
presented in \cite{RACFppb}. Both simulations were done with approximations
of degree five and four finite elements. We used $\delta =10^{-4}$ for the
present method and $10^{-10}$ for the integrator's error tolerance in the
moving finite element method.

\begin{table}[!htb]
\begin{tabular}{l|c|c|c|c|}
& \multicolumn{2}{|c|}{$\displaystyle\max_{j=1,\dots,np}%
\{|u_1(P_j,t_i)-U_1^{(i)}(P_j)|\}$} & \multicolumn{2}{|c|}{$\displaystyle%
\max_{j=1,\dots,np}\{|u_2(P_j,t_i)-U_2^{(i)}(P_j)|\}$} \\[0.3cm]
$t_i$ & \hspace{0.6cm}MFEM\cite{RACFppb}\hspace{0.6cm} & present & \hspace{%
0.6cm}MFEM\cite{RACFppb}\hspace{0.6cm} & present \\ \hline
$0.001$ & 7.3025e-08 & 2.6502e-10 & 4.2464e-08 & 6.3606e-10 \\
$0.005$ & 8.9490e-08 & 1.0338e-09 & 5.2037e-08 & 1.4457e-09 \\
$0.01$ & 2.7945e-08 & 1.4645e-09 & 1.6249e-08 & 1.8010e-09 \\
$0.02$ & 1.3320e-08 & 1.8424e-09 & 7.7437e-09 & 2.0228e-09 \\
$0.05$ & 7.2664e-08 & 2.0530e-09 & 4.2203e-08 & 2.1597e-09 \\
$0.5$ & 1.9044e-08 & 1.0564e-09 & 1.0743e-08 & 1.0907e-09 \\
$1$ & 2.1230e-08 & 5.0614e-10 & 9.3304e-09 & 5.5859e-10
\end{tabular}%
\caption{Comparison of the present method with the moving finite element
method in \protect\cite{RACFppb}}
\label{tabela1}
\end{table}
%\clearpage
\subsection{Example 2}

As a second example, we choose to simulate the second example presented in
\cite{RACFppb}. This will permit us to compare the present method with an
adaptive one. Consider problem (\ref{prob0}) with $ne=2$ and $Q_{t}$ defined by
\begin{equation*}
\alpha (t)=\sqrt{2/3}-\sqrt[3]{t+(2/3)^{3/2}},\quad \beta (t)=1-\alpha
(t),\quad 0\leq t\leq 1.
\end{equation*}%
The diffusion coefficients are
\begin{equation*}
a_{1}(r,s)=2-\frac{1}{1+s^{2}},\quad a_{2}(r,s)=e^{-r^{2}},
\end{equation*}%
and the reaction forces are
\begin{equation*}
f_{1}(x,t)=\frac{0.1x}{(1+t)^{4}},\quad f_{2}(x,t)=\frac{e^{-x^{2}}}{%
(1+t)^{6}}.
\end{equation*}%
The initial conditions $u_{10}$ and $u_{20}$ are the natural spline
functions of degree three that interpolate the points
$\{(0,0),(0.2,1),(0.5,0.5),(1,0)\}$ and
$\{(0,0),\linebreak (0.6,0.65),(0.8,1),(1,0)\},$
respectively. The approximate solutions were obtained with four finite
elements ($h=0.25$), $\delta =10^{-3}$ and $k=4$. The obtained solutions in
the fixed domain are plotted in Figure $4$.

\begin{figure}[!htb]
\center
\includegraphics[width=\textwidth]{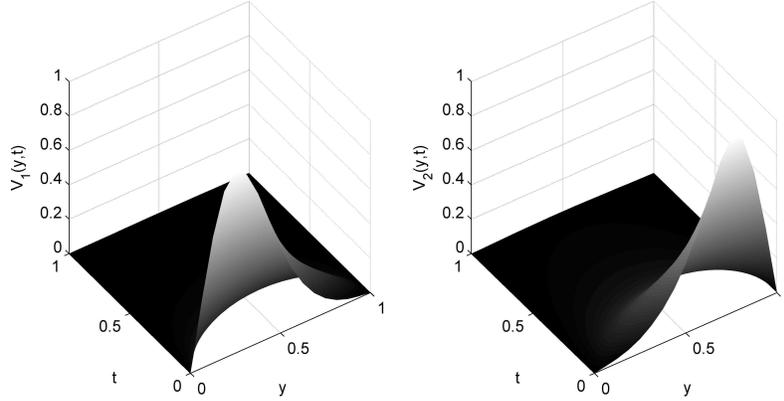}
\caption{Evolution in time of the approximated solution in the fixed
boundary problem for $v_{1}$ (left) and $v_{2}$ ( right).}
\label{ex2_sol_f}
\end{figure}
The pictures in Figure \ref{ex2_sol_m} represent the obtained solutions in
the moving boundary domain, after applying the inverse transformation $\tau
^{-1}(y,t)$. In this example, initially, each population occupies mainly one
region opposite from the other population. As the time increases the two
populations expands to all the domain and decreases very quickly.

\begin{figure}[!htb]
\center
\includegraphics[width=\textwidth]{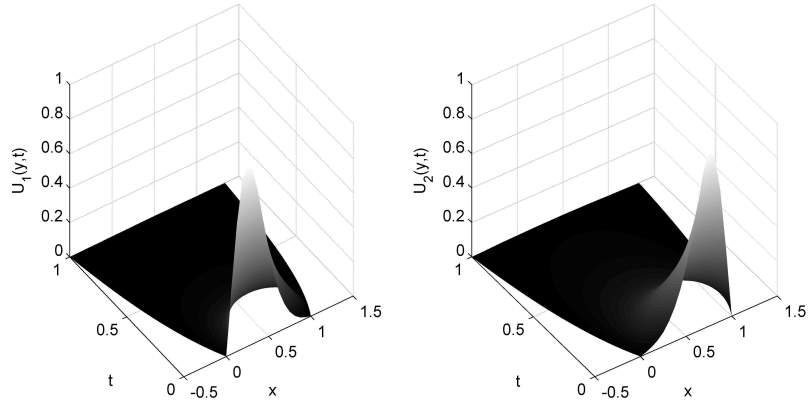}
\caption{Evolution in time of the approximated solution in the moving
boundary problem for $u_{1}$ (left) and $u_{2}$ ( right).}
\label{ex2_sol_m}
\end{figure}
The pictures are similar to those in \cite{RACFppb} and the numerical
comparisons between the two methods show that the methods are similar.
However, due to the fact that in \cite{RACFppb} an adaptive mesh was used,
initially the difference between the methods is greater in the areas where
the solution has a higher slope, but this difference become less significant
as time grows.

\section{Conclusions}

We proved optimal rates of convergence for a linearized
Crank-Nicolson-Galerkin finite element method with piecewise polynomial of
arbitrary degree basis functions in space when applied to a system of
nonlocal parabolic equations. Some numerical experiments were presented,
considering different functions $a$, $f$, $\alpha $ and $\beta $. The
numerical results are in accordance with the theoretical results and are
similar in accuracy to results obtained by other methods.

\section*{Acknowledgements}

This work was partially supported by the research projects:\newline
PEst-OE/MAT/UI0212/2011, financed by FEDER
through the - Programa O\-pe\-ra\-ci\-o\-nal Factores de Competitividade, FCT -
Fundação para a Ciência e a Tecnologia and CAPES - Brazil, Grant BEX 2478-12-9.

\bibliographystyle{plain}
\bibliography{ADFR}

\end{document}